\newtheorem{thm}{Theorem}
\newtheorem{col}{Corollary}
\newtheorem{lem}{Lemma}
\newtheorem{ex}{Example}
\begin{document}

\title{Generalized Transversality Conditions\\
 in Fractional Calculus of Variations}

\author{Ricardo Almeida$^1$\\
\texttt{ricardo.almeida@ua.pt}
\and Agnieszka B. Malinowska$^2$\\
\texttt{a.malinowska@pb.edu.pl}}

\date{$^1$Department of Mathematics, University of Aveiro, 3810-193 Aveiro, Portugal\\[0.3cm]
$^2$Faculty of Computer Science, Bia{\l}ystok University of Technology,\\
15-351 Bia\l ystok, Poland}

\maketitle

%----------------------------------------------------------------
\begin{abstract}
Problems of calculus of variations with variable endpoints cannot be solved without transversality conditions. Here, we establish such type of conditions for fractional variational problems with the Caputo derivative. We consider: the Bolza-type fractional variational problem, the fractional variational problem with a Lagrangian that may also depend on the unspecified end-point $\varphi(b)$, where $x=\varphi(t)$ is a given curve, and the infinite horizon fractional variational problem.
\bigskip

\noindent \textbf{Keywords}: calculus of variations; fractional calculus; fractional Euler--Lagrange equation; transversality conditions; Caputo fractional derivative.

\smallskip

\noindent \textbf{Mathematics Subject Classification}: 49K05; 26A33.
\end{abstract}

% ----------------------------------------

\section{Introduction}

The calculus of variations is concerned with the problem of extremizing functionals.
It has many applications in physics, geometry, engineering, dynamics, control theory, and
economics. The formulation of a problem of the
calculus of variations requires two steps: the specification of a
performance criterion; and then, the statement of physical
constraints that should be satisfied. The basic problem is stated as
follows: among all differentiable functions $x:[a,b]\to\mathbb R$
such that $x(a)=x_a$ and $x(b)=x_b$, with $x_a$, $x_b$ fixed reals,
find the ones that minimize (or maximize) the functional
$$J(x)=\int_a^b L(t,x(t),x'(t))\,dt.$$
One way to deal with this problem is to solve the second order
differential equation
$$\frac{\partial L}{\partial x}-\frac{d}{dt}\frac{\partial L}{\partial x'}=0,$$
called the Euler--Lagrange equation. The two given boundary conditions provide sufficient information to determine the two arbitrary constants. But if there are no boundary constraints, then we need to impose another conditions, called the
natural boundary conditions (see e.g. \cite{Brunt}),
\begin{equation}
\label{naturalBound}\left[\frac{\partial L}{\partial x'}\right]_{t=a}=0 \quad \mbox{ and } \quad \left[\frac{\partial L}{\partial x'}\right]_{t=b}=0.
\end{equation}
Clearly, such terminal conditions are important in models,
the optimal control or decision rules are not unique without these conditions.

Fractional calculus deals with
derivatives and integrals of a non-integer (real or complex) order. Fractional operators are non-local, therefore they are suitable for constructing models possessing memory effect. They found numerous applications in various
fields of science and engineering, as diffusion process, electrical science, electrochemistry, material creep, viscoelasticity, mechanics, control science, electromagnetic theory, \textrm{ect.} Fractional calculus is now recognized as vital mathematical tool to model the behavior
and to understand complex systems (see, \textrm{e.g.}, \cite{Cap,Das,hil,Mag,Mai,old,ort,sab}). Traditional Lagrangian and Hamiltonian mechanics cannot be used with nonconservative forces such as friction. Riewe \cite{rie} showed that fractional formalism can be used when treating dissipative problems. By inserting fractional
derivatives into the variational integrals he obtained the
respective fractional Euler--Lagrange equation, combining both conservative and nonconservative cases. Nowadays the fractional calculus of variations is a subject under strong
research. Investigations cover problems depending on Riemann-Liouville
fractional derivatives (see, \textrm{e.g.},
\cite{AGRA1,Almeida,Baleanu1,bis,Frederico1}), the Caputo fractional
derivative (see, \textrm{e.g.},
\cite{AGRA2,MyID:145,Baleanu:Agrawal,BALEANU,Frederico2,MalTor,withTatiana:Basia}), the symmetric
fractional derivative (see, \textrm{e.g.}, \cite{klimek}), the
Jumarie fractional derivative (see, \textrm{e.g.},
\cite{R:A:D:10,Jumarie,Jumarie3b}), and others
\cite{AGRA3,Almeida:AML,NunoRui,NunoRui2,Cresson,El-Nabulsi:Torres07,El-Nabulsi}.

The aim of this paper is to obtain transversality
conditions for fractional variational problems with the Caputo derivative. Namely, three types of problems are considered: the first in Bolza form, the second with a Lagrangian depending on the unspecified
end-point $\varphi(b)$, where $x=\varphi(t)$ is a given curve, and the third with infinite horizon. We note here, that from the best of our knowledge fractional variational problems with infinite horizon have not been considered yet, and this is an open research area.

The paper is organized in the following way. Section~\ref{sec2}
presents some preliminaries needed in the sequel.
Our main results are stated and proved in the remaining sections. In
Section~\ref{sec3} we consider the Bolza-type fractional variational
problem and develop the transversality conditions in a compact form. As corollaries,
we formulate conditions appropriate to various type of variable terminal points.  Section~\ref{sec4} provides
the necessary optimality conditions for fractional variational
problems with a Lagrangian that may also depend on the unspecified
end-point $\varphi(b)$, where $x=\varphi(t)$ is a given curve. Finally, in
Section~\ref{sec5} we present the transversality condition
for the infinite horizon fractional variational problem.
% ----------------------------------------

\section{Preliminaries}
\label{sec2}

In this section we present a short introduction to the fractional calculus, following \cite{kai,kilbas,Podlubny}. In the sequel,  $\alpha\in (0,1)$ and $\Gamma$ represents the Gamma function:
$$\Gamma(z)=\int_0^\infty t^{z-1}e^{-t}\, dt.$$

Let $f:[a,b]\rightarrow\mathbb{R}$ be a continuous function. Then,
\begin{enumerate}
\item{the left and right Riemann--Liouville fractional integrals of order $\alpha$ are defined by
$${_aI_x^\alpha}f(x)=\frac{1}{\Gamma(\alpha)}\int_a^x (x-t)^{\alpha-1}f(t)dt,$$
and
$${_xI_b^\alpha}f(x)=\frac{1}{\Gamma(\alpha)}\int_x^b(t-x)^{\alpha-1} f(t)dt,$$
respectively;}
\item{the left and right Riemann--Liouville fractional derivatives of order $\alpha$ are defined by
$${_aD_x^\alpha}f(x) =\frac{1}{\Gamma(1-\alpha)}\frac{d}{dx}\int_a^x (x-t)^{-\alpha}f(t)dt,$$
and
$${_xD_b^\alpha}f(x)=\frac{-1}{\Gamma(1-\alpha)}\frac{d}{dx}\int_x^b (t-x)^{-\alpha} f(t)dt,$$
respectively.}
\end{enumerate}

Let $f:[a,b]\rightarrow\mathbb{R}$ be a differentiable function. Then,
\begin{enumerate}
\item{the left and right Caputo fractional derivatives of order $\alpha$ are defined by
$${_a^CD_x^\alpha}f(x)= \frac{1}{\Gamma(1-\alpha)}\int_a^x (x-t)^{-\alpha}f'(t)dt,$$
and
$${_x^CD_b^\alpha}f(x)=\frac{-1}{\Gamma(1-\alpha)}\int_x^b(t-x)^{-\alpha} f'(t)dt,$$
respectively.}
\end{enumerate}
Observe that if $\alpha$ goes to $1$, then the operators ${^C_a
D_x^{\alpha}}$ and ${_a D_x^{\alpha}}$ could be replaced with
$\frac{d}{dx}$ and the operators ${^C_xD_b^\alpha}$ and
${_xD_b^\alpha}$ could be replaced
with $-\frac{d}{dx}$ (see \cite{Podlubny}). Moreover, we set ${_aI_x^0}f={_xI_b^0}f:=f$. \\
Obviously, the above defined operators are linear. If $f \in
C^1[a,b]$, then the left and right Caputo fractional derivatives of
$f$ are continuous on $[a,b]$ (cf. \cite{kilbas}, Theorem 2.2). In
the discussion to follow, we will also need the following fractional
integrations by parts (see e.g. \cite{AGRA3}):
\begin{equation}\label{IP1}\int_{a}^{b}g(x)\cdot {_a^C D_x^\alpha}f(x)dx
=\int_a^b f(x)\cdot {_x D_b^\alpha}
g(x)dx+\left[{_xI_b^{1-\alpha}}g(x) \cdot
f(x)\right]_{x=a}^{x=b}\end{equation} and
\begin{equation*}
\int_{a}^{b}g(x)\cdot {_x^C D_b^\alpha}f(x)dx =\int_a^b f(x)\cdot
{_a D_x^\alpha} g(x)dx-\left[{_aI_x^{1-\alpha}}g(x) \cdot
f(x)\right]_{x=a}^{x=b}.
\end{equation*}

Along the work, and following \cite{zbig}, we denote by
$\partial_iL$, $i=1,\ldots,m$ ($m\in \mathbb{N}$), the partial
derivative of function $L:\mathbb{R}^m\rightarrow \mathbb{R}$ with
respect to its $i$th argument. For simplicity of notation we introduce operators $[x]$ and
$\{x,\varphi\}$ defined by
$$[x](t)=(t,x(t),\, {_a^C D_t^\alpha} x(t)),$$
$$\{x,\varphi\}(t,T)=(t,x(t),\, {_a^C D_t^\alpha} x(t),\varphi(T)).$$
% ----------------------------------------

\section{Transversality conditions I}
\label{sec3}

Let us introduce the linear space
$(x,t)\in C^1([a,b])\times \mathbb R$
endowed with the norm
$\|(x,t)\|_{1,\infty}:=\max_{a\leq t \leq
    b}|x(t)|+\max_{a\leq t \leq
    b}\left|{_a^C D_t^\alpha} x(t)\right|+|t|$.

We consider the following type of functionals:
\begin{equation}\label{funct1}
J(x,T)=\int_a^T L[x](t)\,dt+\phi(T,x(T)),
\end{equation}
on the set $$\mathrm{D}=\left\{ (x,t)\in C^1([a,b])\times [a,b] \, | \, x(a)=x_a \right\},$$ where the {\it Lagrange function} $L:[a,b]\times \mathbb
R^2\to\mathbb R$ and the {\it terminal cost function}
$\phi:[a,b]\times \mathbb R\to\mathbb R$ are at least of class
$C^1$. Observe that we have a free end-point $T$ and no constraint
on $x(T)$. Therefore, they become a part of the optimal choice
process. We address the problem of finding a pair $(x,T)$ which
minimizes (or maximizes) the functional $J$ on $\mathrm{D}$,
\textrm{i.e.}, there exists $\delta>0$ such that $J(x,T)\leq
J(\bar{x},t)$ (or $J(x,T)\geq J(\bar{x},t)$) for all $(\bar{x},t)\in
\mathrm{D}$ with $\|(\bar{x}-x,t-T)\|_{1,\infty}<\delta$.

\begin{thm}\label{Teotransv} Consider the functional given by \eqref{funct1}.
Suppose that $(x,T)$ gives a minimum (or maximum) for functional
\eqref{funct1} on $\mathrm{D}$. Then $x$ is a solution
of the fractional Euler--Lagrange equation
\begin{equation}\label{eqeuler}
\partial_2 L[x](t)+{_t D_T^\alpha}( \partial_3L[x](t))=0
\end{equation}
 on the interval $[a,T]$ and satisfies the transversality
conditions
\begin{equation}\label{eqtransv}\left\{ \begin{array}{l}
 L[x](T)+ \partial_1\phi(T,x(T)) -x'(T) \left[ {_t I_T^{1-\alpha}}\partial_3L[x](t) \right]_{t=T}=0\\
 \left[{_t I_T^{1-\alpha}}\partial_3L[x] (t) \right]_{t=T} + \partial_2\phi(T,x(T))=0.\\
\end{array}\right.\end{equation}
\end{thm}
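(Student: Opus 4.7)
The plan is to run a standard first-variation argument, adapted to the Caputo setting and to the fact that both the end-time $T$ and the end-value $x(T)$ are free. Let $(x,T)$ be a local minimizer. I will embed it into a two-parameter family of admissible pairs $(x+\varepsilon\eta,\,T+\varepsilon\Delta T)$, where $\Delta T\in\mathbb R$ is arbitrary and $\eta\in C^1([a,b])$ satisfies $\eta(a)=0$ (to preserve the left boundary constraint $x(a)=x_a$); no constraint is imposed on $\eta(T)$. Setting
\[
j(\varepsilon)=\int_a^{T+\varepsilon\Delta T}L\bigl(t,(x+\varepsilon\eta)(t),{_a^CD_t^\alpha}(x+\varepsilon\eta)(t)\bigr)\,dt+\phi\bigl(T+\varepsilon\Delta T,(x+\varepsilon\eta)(T+\varepsilon\Delta T)\bigr),
\]
optimality forces $j'(0)=0$. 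Using linearity of ${_a^CD_t^\alpha}$, the Leibniz rule for the variable upper limit, and the chain rule on $\phi$, I compute
\[
\begin{aligned}
j'(0)=\int_a^T\bigl[\partial_2L[x](t)\,\eta(t)+\partial_3L[x](t)\cdot{_a^CD_t^\alpha}\eta(t)\bigr]dt&+L[x](T)\,\Delta T\\
&+\partial_1\phi(T,x(T))\,\Delta T+\partial_2\phi(T,x(T))\bigl(x'(T)\Delta T+\eta(T)\bigr).
\end{aligned}
\]

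Next I apply the fractional integration-by-parts formula \eqref{IP1} on the interval $[a,T]$ to the term containing ${_a^CD_t^\alpha}\eta$. Because $\eta(a)=0$, only the boundary contribution at $t=T$ survives, yielding
\[
\int_a^T\partial_3L[x](t)\cdot{_a^CD_t^\alpha}\eta(t)\,dt=\int_a^T\eta(t)\cdot{_tD_T^\alpha}\bigl(\partial_3L[x](t)\bigr)dt+\bigl[{_tI_T^{1-\alpha}}\partial_3L[x](t)\bigr]_{t=T}\eta(T).
\]
Substituting back, $j'(0)=0$ rewrites as a sum of an integral in $\eta$ on $[a,T]$ plus three boundary-type terms multiplying $\eta(T)$ and $\Delta T$.

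To extract the conclusions I then peel off the free parameters one at a time. First, take $\Delta T=0$ and $\eta$ with compact support in $(a,T)$; the fundamental lemma of the calculus of variations produces the fractional Euler--Lagrange equation \eqref{eqeuler}. Second, still with $\Delta T=0$ but allowing $\eta(T)$ to be arbitrary, the remaining boundary term gives
\[
\bigl[{_tI_T^{1-\alpha}}\partial_3L[x](t)\bigr]_{t=T}+\partial_2\phi(T,x(T))=0,
\]
which is the second equation in \eqref{eqtransv}. Third, take $\eta\equiv 0$ and $\Delta T$ arbitrary; the surviving relation
\[
L[x](T)+\partial_1\phi(T,x(T))+\partial_2\phi(T,x(T))\,x'(T)=0,
\]
combined with the second transversality condition just obtained, yields the first equation in \eqref{eqtransv}.

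The main technical delicacy, and the only place where one has to be careful, is the differentiation of $j$: the upper limit $T+\varepsilon\Delta T$ appears simultaneously in the integral, inside $(x+\varepsilon\eta)(T+\varepsilon\Delta T)$, and as the right endpoint governing the fractional integration by parts (so the right Riemann--Liouville derivative in \eqref{eqeuler} is taken on $[a,T]$, with upper index $T$). Once the $\varepsilon$-derivative is assembled correctly and \eqref{IP1} is invoked, the remainder of the argument is just the successive-specialization scheme described above.
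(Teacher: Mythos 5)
Your proof is correct and follows essentially the same route as the paper: a two-parameter variation $(x+\epsilon h, T+\epsilon\triangle T)$, the fractional integration by parts \eqref{IP1} with $h(a)=0$, the fundamental lemma for \eqref{eqeuler}, and then specialization of the boundary data $h(T)$ and $\triangle T$. The only (immaterial) difference is at the end: the paper chooses $h(T)=-x'(T)\triangle T$ to read off the first condition of \eqref{eqtransv} directly, whereas you take $\eta\equiv 0$ with $\triangle T$ arbitrary and then substitute the already-derived second condition; both yield the same system.
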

\begin{proof}
Let us consider a variation $(x(t)+\epsilon h(t),T+\epsilon
\triangle T)$, where $h\in C^1([a,b])$, $\triangle T \in\mathbb{R}$
and $\epsilon \in\mathbb{R}$ with $|\epsilon| \ll 1$. The constraint
 $x(a)=x_a$ implies that all admissible variations must fulfill the
condition $h(a)=0$. Define $j(\cdot)$ on a neighborhood of zero by
$$\begin{array}{ll}
j(\epsilon) & =J(x+\epsilon h,T+\epsilon \triangle T)\\
      &=\displaystyle\int_a^{T+\epsilon \triangle T} L[x+\epsilon h](t)\,dt+\phi(T+\epsilon \triangle T,(x+\epsilon h)(T+\epsilon \triangle T)).
\end{array}$$
If $(x,T)$ minimizes (or maximizes) functional \eqref{funct1} on
$\mathrm{D}$, then $j'(0)=0$. Therefore, one has
$$\begin{array}{ll}
0 & =\displaystyle \int_a^{T}\left[\partial_2 L[x](t)h(t)+\partial_3L[x](t) {_a^C D_t^\alpha}h(t)\right] \,dt+L[x](T) \triangle T\\
   & \quad+\partial_1\phi(T,x(T))\triangle T+\partial_2\phi(T,x(T))[h(T)+x'(T)\triangle T].
\end{array}$$
Integrating by parts (cf. equation \eqref{IP1}), and since $h(a)=0$, we get
\begin{equation}\label{necessarycondition}\begin{array}{ll}
0 & =\displaystyle \int_a^{T}\left[\partial_2 L[x](t)+{_t D_T^\alpha}( \partial_3L[x](t))\right] h(t) \,dt+ \left[ {_t I_T^{1-\alpha}}(\partial_3L[x](t))h(t) \right]_{t=T}  +L[x](T) \triangle T\\
   & \quad+\partial_1\phi(T,x(T))\triangle T+\partial_2\phi(T,x(T))[h(T)+x'(T)\triangle T]\\
  & = \displaystyle \int_a^{T}\left[\partial_2 L[x](t)+{_t D_T^\alpha}( \partial_3L[x](t))\right] h(t) \,dt \\
      & \quad +\triangle T \left[ L[x](T)+ \partial_1\phi(T,x(T)) -x'(T) \left[ {_t I_T^{1-\alpha}}\partial_3L[x](t) \right]_{t=T} \right]\\
  & \quad +  \left[\left[{_t I_T^{1-\alpha}}\partial_3L[x](t)  \right]_{t=T} + \partial_2\phi(T,x(T)) \right] \left[  h(T)+x'(T) \triangle T\right].  \\
      \end{array}\end{equation}
As $h$ and $\triangle T$ are arbitrary we can choose $h(T)=0$ and $\triangle T=0$.  Then, by the
fundamental lemma of the calculus of variations we deduce equation
\eqref{eqeuler}. But if $x$ is a solution of
\eqref{eqeuler}, then the condition
\eqref{necessarycondition} takes the form
\begin{equation}\label{necessarycondition_after}\begin{array}{ll}
0 & =\triangle T \left[ L[x](T)+ \partial_1\phi(T,x(T)) -x'(T) \left[ {_t I_T^{1-\alpha}}\partial_3L[x](t) \right]_{t=T} \right]\\
  & \quad +  \left[\left[{_t I_T^{1-\alpha}}\partial_3L[x](t)  \right]_{t=T} + \partial_2\phi(T,x(T)) \right] \left[  h(T)+x'(T) \triangle T\right].  \\
      \end{array}\end{equation}
Restricting ourselves to those $h$ for which $h(T)=-x'(T) \triangle T$ we get the first equation of \eqref{eqtransv}. Analogously, considering those variations for which $\triangle T=0$ we get the second equation of \eqref{eqtransv}.
\end{proof}

\begin{ex} Let
$$J(x,T)=\int_0^T \left[ t^2-1+( {_a^C D_t^\alpha} x(t))^2 \right]\,dt, \quad T\in[0,10].$$
It easy to verify that a constant function $x(t)=K$ and the end-point $T=1$ satisfies the necessary conditions of optimality of Theorem \ref{Teotransv}, with the value of $K$ being determined by the initial-point $x(0)$.
\end{ex}

In the case when $\alpha$ goes to $1$, by Theorem~\ref{Teotransv} we
obtain the following result.

\begin{col} (\cite{Chachuat}, Theorem 2.24) If $(x,T)$ gives a minimum (or maximum) for
$$J(x,T)=\int_a^T L(t,x(t),x'(t))\,dt+\phi(T,x(T))$$
 on the set
$$\left\{ x\in C^1([a,b]) \, | \, x(a)=x_a \right\}\times[a,b],$$
then $x$ is a solution of the Euler--Lagrange equation
$$\partial_2 L(t,x(t),x'(t))-\frac{d}{dt}\partial_3L(t,x(t),x'(t))=0$$
on the interval $[a,T]$ and satisfies the transversality conditions
$$L(T,x(T),x'(T))+ \partial_1\phi(T,x(T)) -x'(T) \partial_3L(T,x(T),x'(T))=0,$$
$$\partial_3L(T,x(T),x'(T))+ \partial_2\phi(T,x(T))=0.$$
\end{col}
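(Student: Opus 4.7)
The plan is to obtain this classical result as a direct specialization of Theorem \ref{Teotransv} in the limit $\alpha\to 1$. The preliminaries in Section \ref{sec2} record the three operator reductions that drive everything: as $\alpha\to 1$, the left Caputo derivative ${_a^C D_t^\alpha}$ collapses to $\frac{d}{dt}$, the right Riemann--Liouville derivative ${_t D_T^\alpha}$ collapses to $-\frac{d}{dt}$, and the right Riemann--Liouville integral ${_t I_T^{1-\alpha}}$ becomes ${_t I_T^{0}}$, i.e.\ the identity. These are exactly the three ingredients needed to read off the classical Euler--Lagrange equation and the two classical transversality conditions from their fractional counterparts.

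First I would identify the data: the corollary's functional is \eqref{funct1} with $\alpha=1$, so the compact argument $[x](t)=(t,x(t),{_a^C D_t^\alpha}x(t))$ reduces to $(t,x(t),x'(t))$, and $\partial_2 L[x]$, $\partial_3 L[x]$ coincide with the partial derivatives appearing in the statement. The admissible set and the minimization/maximization notion are literally the $\alpha=1$ case of $\mathrm{D}$, so the hypotheses of Theorem \ref{Teotransv} apply.

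Next I would translate the two conclusions of Theorem \ref{Teotransv}. For the Euler--Lagrange equation \eqref{eqeuler}, the second summand ${_t D_T^\alpha}(\partial_3 L[x](t))$ becomes $-\frac{d}{dt}\partial_3 L(t,x(t),x'(t))$, producing the classical Euler--Lagrange equation on $[a,T]$ as stated. For the transversality conditions \eqref{eqtransv}, the boundary quantity $\bigl[{_t I_T^{1-\alpha}}\partial_3 L[x](t)\bigr]_{t=T}$ reduces to $\partial_3 L(T,x(T),x'(T))$ because ${_t I_T^{0}}=\mathrm{id}$. Substituting this expression into each of the two equations in \eqref{eqtransv} yields, respectively, the first and second classical transversality conditions of the corollary.

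The work here is essentially bookkeeping the operator limits; no fresh variational argument is needed, since every computation has already been carried out inside the proof of Theorem \ref{Teotransv}. The only point that requires a line of care is noting that the $C^1$ regularity assumed on $L$, $\phi$, and $x$ makes all the limiting derivatives and integrals well-defined classical objects, so the passage $\alpha\to 1$ does not introduce any analytic obstruction. I expect no genuine difficulty beyond unwinding notation.
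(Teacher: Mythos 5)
Your proposal is correct and matches the paper's approach: the paper derives this corollary simply by letting $\alpha\to 1$ in Theorem \ref{Teotransv}, using exactly the operator reductions you list (Caputo derivative to $\frac{d}{dt}$, right Riemann--Liouville derivative to $-\frac{d}{dt}$, and ${_tI_T^{0}}$ to the identity) as recorded in Section \ref{sec2}. No further comment is needed.
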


Now we shall rewrite the necessary conditions \eqref{eqtransv} in
terms of the increment on time $\triangle T$ and on the consequent
increment on $x$, $\triangle x_T$. Let us fix $\epsilon=1$ and
consider variation functions $h$ satisfying the additional condition
$h'(T)=0$. Define the total increment by
$$\triangle x_T=(x+h)(T+\triangle T)-x(T).$$
Doing Taylor's expansion up to first order, for a small $\triangle
T$, we have
$$(x+h)(T+\triangle T)-(x+h)(T)=x'(T)\triangle T+O(\triangle T)^2,$$
and so we can write $h(T)$ in terms of $\triangle T$ and $\triangle x_T$:
\begin{equation}\label{h(T)}h(T)=\triangle x_T-x'(T)\triangle T+O(\triangle T)^2.\end{equation}
If $(x,T)$ gives an extremum for $J$, then
$$\partial_2 L[x](t)+{_t D_T^\alpha}( \partial_3L[x](t))=0 \quad \mbox{holds for all } t\in[a,T].$$
Therefore, substituting \eqref{h(T)} into equation \eqref{necessarycondition} we obtain
$$\triangle T \left[ L[x](T)+ \partial_1\phi(T,x(T)) -x'(T)\left[{_t I_T^{1-\alpha}}\partial_3L[x](t)  \right]_{t=T}\right]$$
\begin{equation}\label{necessarycondition2}
\quad +  \triangle x_T \left[ \left[ {_t
I_T^{1-\alpha}}\partial_3L[x](t) \right]_{t=T}+
\partial_2\phi(T,x(T))\right]+   O(\triangle T)^2=0.\end{equation}
We remark that the above equation is evaluated in one single point
$x=T$. Equation \eqref{necessarycondition2} replaces the missing
terminal condition in the problem.

Let us consider five particular cases of constraints that can be specified in the optimization problem.

\textbf{A. Vertical terminal line}

In this case the upper bound $T$ is fixed and consequently the
variation $\triangle T=0$. Therefore, equation
\eqref{necessarycondition2} becomes
$$\triangle x_T \left[ \left[ {_t I_T^{1-\alpha}}\partial_3L[x](t) \right]_{t=T}+ \partial_2\phi(T,x(T))\right]=0,$$
and by the arbitrariness of $\triangle x_T$, we deduce
$$ \left[ {_t I_T^{1-\alpha}}\partial_3L[x](t) \right]_{t=T}+ \partial_2\phi(T,x(T))=0.$$
When $\phi\equiv 0$, we get the natural boundary condition as
obtained in \cite{AGRA1}, equation (25):
$$ \left[ {_t I_T^{1-\alpha}}\partial_3L[x](t) \right]_{t=T}=0.$$
For $\phi\equiv 0$ and $\alpha\rightarrow 1$, we get the second
equation of \eqref{naturalBound}.

\textbf{B. Horizontal terminal line}

Now we have $\triangle x_T=0$ but $\triangle T$ is arbitrary. Hence,
equation \eqref{necessarycondition2} implies
$$L[x](T)+ \partial_1\phi(T,x(T))-x'(T) \left[{_t I_T^{1-\alpha}}\partial_3L[x](t)  \right]_{t=T}=0.$$
When $\phi\equiv 0$ and $\alpha\rightarrow 1$, we get equation
(3.11) of \cite{Chiang}. It is worth pointing out that the integer
case has an economic interpretation (see explanation in
\cite{Chiang}, pags. 63--64).

\textbf{C. Terminal curve}

In this case the terminal point is described by a given curve
$\psi$, in the sense that $x(T)=\psi(T)$, where
$\psi:[a,b]\to\mathbb R$ is a prescribed differentiable curve. For a
small $\triangle T$, from Taylor's formula, one has
\begin{equation}\label{from:Tay}
\begin{array}{ll}
\triangle x_T & = \psi(T+\triangle T)-\psi(T)\\
              & = \psi'(T) \triangle T+ O(\triangle T)^2.\\
\end{array}
\end{equation}
Substituting \eqref{from:Tay} into \eqref{necessarycondition2}
yields
$$(\psi'(T)-  x'(T) ) \left\{ \left[ {_t I_T^{1-\alpha}}\partial_3L[x](t) \right]_{t=T}+ \partial_2\phi(T,x(T))\right\} + L[x](T)+ \partial_1\phi(T,x(T))=0.$$
For $\phi\equiv 0$, we obtain equation (29) of \cite{AGRA2}. For
$\phi\equiv 0$ and $\alpha\rightarrow 1$, we have equation (3.12) of
\cite{Chiang}.

\textbf{D. Truncated vertical terminal line}

Now we consider the case where $\triangle T=0$ and $x(T)\geq
x_{min}$. Here $x_{min}$ is a minimum permissible level of $x$. By the Kuhn-Tucker
conditions, we obtain
$$\left[ {_t I_T^{1-\alpha}}\partial_3L[x](t) \right]_{t=T}+ \partial_2\phi(T,x(T))\leq 0,\quad x(T)\geq
x_{min},$$ $$(x(T)-x_{min})\left\{ \left[ {_t
I_T^{1-\alpha}}\partial_3L[x](t) \right]_{t=T}+
\partial_2\phi(T,x(T))\right\}=0$$
for maximization problem;
and
$$\left[ {_t I_T^{1-\alpha}}\partial_3L[x](t) \right]_{t=T}+ \partial_2\phi(T,x(T))\geq 0,\quad x(T)\geq
x_{min},$$ $$(x(T)-x_{min})\left\{\left[ {_t
I_T^{1-\alpha}}\partial_3L[x](t) \right]_{t=T}+
\partial_2\phi(T,x(T))\right\}=0$$
for minimization problem.  If $\phi\equiv 0$ and
$\alpha\rightarrow 1$, we get equations (3.17) and (3.17') of
\cite{Chiang}.

\textbf{E. Truncated horizontal terminal line}

In this situation, the constraints are $\triangle x_T=0$ and $T\leq T_{max}$. Therefore, as in the previous case, we obtain:
$$ L[x](T)+ \partial_1\phi(T,x(T)) -x'(T)\left[{_t I_T^{1-\alpha}}\partial_3L[x](t)  \right]_{t=T}\geq 0,\quad T\leq T_{max},$$
$$(T-T_{max})\left[ L[x](T)+ \partial_1\phi(T,x(T)) -x'(T)\left[{_t I_T^{1-\alpha}}\partial_3L[x](t)  \right]_{t=T}\right]=0$$
for maximization problem; and
$$ L[x](T)+ \partial_1\phi(T,x(T)) -x'(T)\left[{_t I_T^{1-\alpha}}\partial_3L[x](t)  \right]_{t=T}\leq 0,\quad T\leq T_{max},$$
$$(T-T_{max})\left[ L[x](T)+ \partial_1\phi(T,x(T)) -x'(T)\left[{_t I_T^{1-\alpha}}\partial_3L[x](t)  \right]_{t=T}\right]=0$$
for minimization problem.
Observe that for $\phi\equiv 0$ and $\alpha\rightarrow 1$, we get equation (3.18)
and (3.18') of \cite{Chiang}.

% ----------------------------------------

\section{Transversality conditions II}
\label{sec4}

In this section we consider the following variational problem:
\begin{equation}
\label{problem}
\begin{gathered}
J(x,T)=\int_{a}^{T} L\{x,\varphi\}(t,T)dt \longrightarrow \textrm{extr}\\
(x,T) \in \mathrm{D}\\
x(T)=\varphi(T)
\end{gathered}
\end{equation}
where $L:[a,b]\times \mathbb R^3\to\mathbb R$ and
$\varphi:[a,b]\to\mathbb R$ are at least of class $C^1$. Here
$x=\varphi (t)$ is a specified curve.

\begin{thm}\label{Teotransv_2} Suppose that $(x,T)$ is a solution to problem \eqref{problem}.
Then $x$ is a solution of the fractional Euler--Lagrange equation
\begin{equation}\label{E-L}
\partial_2
L\{x,\varphi\}(t,T)+{_t D_T^\alpha}(
\partial_3L\{x,\varphi\}(t,T))=0
\end{equation}
on the interval $[a,T]$, and satisfies the transversality condition
\begin{equation}\label{eqtransv_2}
(\varphi'(T)-x'(T)) \left[ {_t
I_T^{1-\alpha}}\partial_3L\{x,\varphi\}(t,T)
\right]_{t=T}+\varphi'(T)\int_{a}^{T}\partial_4L\{x,\varphi\}(t,T)dt+L\{x,\varphi\}(T,T)=0.
\end{equation}
\end{thm}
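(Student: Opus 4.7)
The plan is to mimic the proof of Theorem~\ref{Teotransv}, but with two new ingredients: the Lagrangian now depends explicitly on $T$ through its fourth slot $\varphi(T)$, and the terminal condition $x(T)=\varphi(T)$ imposes a relationship between the admissible variation $h(T)$ and the variation $\triangle T$ in the free endpoint.

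First I would take a variation $(x+\epsilon h,T+\epsilon\triangle T)$ with $h\in C^1([a,b])$ satisfying $h(a)=0$ (from $x(a)=x_a$). The terminal constraint $(x+\epsilon h)(T+\epsilon\triangle T)=\varphi(T+\epsilon\triangle T)$, differentiated at $\epsilon=0$, yields the admissibility relation
\begin{equation*}
h(T)=(\varphi'(T)-x'(T))\,\triangle T.
\end{equation*}
Then I would define $j(\epsilon)=J(x+\epsilon h,T+\epsilon\triangle T)$ and compute $j'(0)=0$. Differentiation under the integral produces three contributions: the usual variations in $x$ and ${_a^C D_t^\alpha}x$, a new term $\varphi'(T)\triangle T\int_a^{T}\partial_4 L\{x,\varphi\}(t,T)\,dt$ coming from the $T$-dependence inside $\varphi(T)$, and the endpoint term $L\{x,\varphi\}(T,T)\,\triangle T$ from moving the upper limit. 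Concretely,
\begin{equation*}
0=\int_a^{T}\!\!\bigl[\partial_2 L\{x,\varphi\}(t,T)\,h(t)+\partial_3 L\{x,\varphi\}(t,T)\,{_a^C D_t^\alpha}h(t)\bigr]dt+\varphi'(T)\triangle T\!\int_a^{T}\!\!\partial_4 L\{x,\varphi\}(t,T)\,dt+L\{x,\varphi\}(T,T)\triangle T.
\end{equation*}

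Next I would apply the fractional integration by parts formula \eqref{IP1} to the term containing ${_a^C D_t^\alpha}h$, using $h(a)=0$ to discard the boundary contribution at $t=a$ and leaving only the value at $t=T$:
\begin{equation*}
\int_a^{T}\partial_3 L\{x,\varphi\}(t,T)\,{_a^C D_t^\alpha}h(t)\,dt=\int_a^{T}{_t D_T^\alpha}\bigl(\partial_3 L\{x,\varphi\}(t,T)\bigr)h(t)\,dt+\bigl[{_t I_T^{1-\alpha}}\partial_3 L\{x,\varphi\}(t,T)\bigr]_{t=T}h(T).
\end{equation*}
To extract the Euler--Lagrange equation \eqref{E-L}, I restrict to variations with $\triangle T=0$; the admissibility relation then forces $h(T)=0$, so only the integral term survives and the fundamental lemma of the calculus of variations yields \eqref{E-L} on $[a,T]$.

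Finally, with the Euler--Lagrange equation holding, the remaining condition reduces to
\begin{equation*}
\bigl[{_t I_T^{1-\alpha}}\partial_3 L\{x,\varphi\}(t,T)\bigr]_{t=T}h(T)+\varphi'(T)\triangle T\int_a^{T}\partial_4 L\{x,\varphi\}(t,T)\,dt+L\{x,\varphi\}(T,T)\triangle T=0.
\end{equation*}
Substituting $h(T)=(\varphi'(T)-x'(T))\triangle T$ and using arbitrariness of $\triangle T$ gives exactly \eqref{eqtransv_2}. The main delicate point is bookkeeping: correctly differentiating under the integral when the parameter $T$ appears simultaneously in the upper limit and inside the integrand via $\varphi(T)$, which produces the extra $\varphi'(T)\int_a^T\partial_4 L\,dt$ term absent from Theorem~\ref{Teotransv}; and recognizing that the constraint $x(T)=\varphi(T)$ couples $h(T)$ to $\triangle T$, so one cannot treat them as independent as was done in the Bolza case.
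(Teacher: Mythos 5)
Your proposal is correct and follows essentially the same route as the paper's proof: the same variation $(x+\epsilon h,T+\epsilon\triangle T)$, the same differentiation of $j(\epsilon)$ producing the $\varphi'(T)\triangle T\int_a^T\partial_4 L\,dt$ and $L\{x,\varphi\}(T,T)\triangle T$ terms, fractional integration by parts via \eqref{IP1}, the fundamental lemma for \eqref{E-L}, and the constraint-induced relation $h(T)=(\varphi'(T)-x'(T))\triangle T$ to obtain \eqref{eqtransv_2}. The only (harmless) difference is that you derive the coupling between $h(T)$ and $\triangle T$ at the outset, whereas the paper invokes it only after the Euler--Lagrange equation has been established.
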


\begin{proof}
Suppose that $(x,T)$ is a solution to problem \eqref{problem} and
consider the value of $J$ at an admissible variation $(
x(t)+\epsilon h(t),T+\epsilon \triangle T)$, where $\epsilon\in
\mathbb R$ is a small parameter, $\triangle T\in\mathbb R$, and
$h\in C^1([a,b])$ with $h(a)=0$. Let
$$j(\epsilon)=J(x+\epsilon h,T+\epsilon \triangle
T)=\displaystyle\int_a^{T+\epsilon \triangle T} L\{x+\epsilon
h,\varphi\}(t,T+\epsilon \triangle T)dt.$$ Then, a necessary
condition for $(x,T)$ to be a solution to problem \eqref{problem} is
given by
$$\begin{array}{ll} j'(0)=0 \Leftrightarrow
 &\displaystyle\int_a^{T}\left[\partial_2
L\{x,\varphi\}(t,T)h(t)+\partial_3L\{x,\varphi\}(t,T) {_a^C
D_t^\alpha}h(t) +\partial_4L\{x,\varphi\}(t,T)\varphi'(T)\triangle
T\right]dt\\
&+L\{x,\varphi\}(T,T)\triangle T=0.
\end{array}$$
Integrating by parts (cf. equation \eqref{IP1}), and since $h(a)=0$, we get
\begin{equation}\label{necessarycondition_2}\begin{array}{ll}
0 & =\displaystyle \int_a^{T}\left[\partial_2
L\{x,\varphi\}(t,T)+{_t D_T^\alpha}(
\partial_3L\{x,\varphi\}(t,T))\right] h(t) \,dt+ \left[ {_t
I_T^{1-\alpha}}(\partial_3L\{x,\varphi\}(t,T))h(t) \right]_{t=T}\\
 &+L\{x,\varphi\}(T,T)\triangle T+\int_a^{T}\left[\partial_4
L\{x,\varphi\}(t,T)\varphi'(T)\triangle T\right]dt.
\end{array}\end{equation}
As $h$ and $\triangle T$ are arbitrary, first we consider $h$ and
$\triangle T$ such that $h(T)=0$ and $\triangle T=0$. Then, by the
fundamental lemma of the calculus of variations we deduce equation
\eqref{E-L}. Therefore, in order for $(x,T)$ to
be a solution to problem \eqref{problem}, $x$ must be a solution of
the fractional Euler--Lagrange equation. But if $x$ is a solution of
\eqref{E-L}, the first integral in expression
\eqref{necessarycondition_2} vanishes, and then the condition
\eqref{necessarycondition_2} takes the form
\begin{equation}\label{trans}
\left[ {_t I_T^{1-\alpha}}(\partial_3L\{x,\varphi\}(t,T))h(t)
\right]_{t=T}+L\{x,\varphi\}(T,T)\triangle
T+\int_a^{T}\left[\partial_4 L\{x,\varphi\}(t,T)\varphi'(T)\triangle
T\right]dt=0.
\end{equation}
Since the right hand point of $x$ lies on the curve $z=\varphi(t)$,
we have
$$x(T+\epsilon \triangle T)+\epsilon h(T+\epsilon \triangle
T)=\varphi(T+\epsilon \triangle T).$$ Hence, differentiating
with respect $\epsilon$ and setting $\epsilon=0$ we get
\begin{equation}\label{curve}
h(T)=(\varphi'(T)-x'(T))\triangle T.
\end{equation}
Substituting \eqref{curve} into \eqref{trans} yields
\begin{multline*}
\triangle T(\varphi'(T)-x'(T)) \left[ {_t
I_T^{1-\alpha}}\partial_3L\{x,\varphi\}(t,T)
\right]_{t=T}\\
+\varphi'(T)\triangle
T\int_{a}^{T}\partial_4L\{x,\varphi\}(t,T)dt+L\{x,\varphi\}(T,T)\triangle
T=0.
\end{multline*}
Since $\triangle T$ can take any value, we
obtain condition \eqref{eqtransv_2}.
\end{proof}

In the case when $\alpha$ goes to $1$, by Theorem~\ref{Teotransv_2}
we obtain the following result.

\begin{col} If $(x,T)$ gives an extremum for
$$J(x,T)=\int_a^T L(t,x(t),x'(t),\varphi(T))\,dt$$
 on the set
$$\left\{ x\in C^1([a,b]) \, | \, x(a)=x_a \right\}\times[a,b],$$
then $x$ is a solution of the Euler--Lagrange equation
$$\partial_2 L(t,x(t),x'(t),\varphi(T))-\frac{d}{dt}\partial_3L(t,x(t),x'(t),\varphi(T))=0, \quad \mbox{for all } t\in[a,T],$$
and satisfies the following transversality condition
$$(\varphi'(T)-x'(T))\partial_3L(T,x(T),x'(T),\varphi(T))+\varphi'(T)\int_a^T \partial_4L(t,x(t),x'(t),\varphi(T))\,dt$$
$$+L(T,x(T),x'(T),\varphi(T))=0.$$
\end{col}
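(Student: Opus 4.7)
The plan is to obtain this corollary as the classical limit $\alpha\to 1$ of Theorem~\ref{Teotransv_2}, mirroring what was done for the corollary of Theorem~\ref{Teotransv}. The preliminaries record the three facts needed: as $\alpha\to 1$, one has ${_a^C D_t^\alpha}x(t)\to x'(t)$ and ${_t D_T^\alpha}g(t)\to -\frac{d}{dt}g(t)$; moreover ${_t I_T^{1-\alpha}}g(t)\to g(t)$, because $1-\alpha\to 0$ and ${_tI_T^0}g:=g$ by the convention fixed in Section~\ref{sec2}. Under these identifications, $\{x,\varphi\}(t,T)=(t,x(t),{_a^C D_t^\alpha}x(t),\varphi(T))$ collapses to $(t,x(t),x'(t),\varphi(T))$.

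First I would apply Theorem~\ref{Teotransv_2} to the given functional, reading $L\{x,\varphi\}(t,T)$ as $L(t,x(t),x'(t),\varphi(T))$ in the limit. In \eqref{E-L} the term ${_t D_T^\alpha}\bigl(\partial_3 L\{x,\varphi\}(t,T)\bigr)$ becomes $-\frac{d}{dt}\partial_3 L(t,x(t),x'(t),\varphi(T))$, which produces the stated classical Euler--Lagrange equation on $[a,T]$.

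Second, I would carry out the same substitution in \eqref{eqtransv_2}. The boundary factor $\bigl[{_t I_T^{1-\alpha}}\partial_3 L\{x,\varphi\}(t,T)\bigr]_{t=T}$ simplifies to $\partial_3 L(T,x(T),x'(T),\varphi(T))$, while the remaining contributions $L\{x,\varphi\}(T,T)$ and $\int_a^T\partial_4 L\{x,\varphi\}(t,T)\,dt$ already involve only classical objects and transcribe directly. Assembling these yields exactly the transversality condition as stated.

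The only delicate point is that Theorem~\ref{Teotransv_2} is proved for $\alpha\in(0,1)$, so strictly speaking one cannot just plug in $\alpha=1$. This is dispatched by observing that the theorem's proof uses only (i) a smooth variation $(x+\epsilon h, T+\epsilon\triangle T)$ with $h(a)=0$, (ii) the fractional integration by parts formula \eqref{IP1}, and (iii) the fundamental lemma of the calculus of variations together with the boundary constraint $x(T)=\varphi(T)$ to produce \eqref{curve}. Each ingredient has an immediate classical counterpart: \eqref{IP1} is replaced by ordinary integration by parts with boundary term $[\partial_3 L\cdot h]_{t=T}$, and the rest of the argument is unchanged. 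Hence repeating the proof of Theorem~\ref{Teotransv_2} verbatim with $d/dt$ in place of ${_a^C D_t^\alpha}$ and $-d/dt$ in place of ${_t D_T^\alpha}$ delivers the corollary; no new idea is required beyond checking that the limit substitutions listed in the first paragraph commute with every step.
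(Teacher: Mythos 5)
Your proposal is correct and follows essentially the same route as the paper: the authors obtain this corollary precisely by letting $\alpha\to 1$ in Theorem~\ref{Teotransv_2}, using the replacements ${_a^C D_t^\alpha}\to\frac{d}{dt}$, ${_t D_T^\alpha}\to-\frac{d}{dt}$, and ${_tI_T^{0}}g=g$ recorded in Section~\ref{sec2}. Your additional remark that one should really rerun the variational argument with classical integration by parts, rather than literally substituting $\alpha=1$ into a theorem proved for $\alpha\in(0,1)$, is a point of rigour the paper leaves implicit but does not change the argument.
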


\begin{ex}
Consider the following problem
\begin{equation}
\label{EX}
\begin{gathered}
J(x,T)=\int_0^T \left[({^C_0 D_t^{\alpha}}x(t))^2+\varphi^2(T)\right]dt \longrightarrow \min\\
x(T)=\varphi(T)=T,\\
x(0)=1.
\end{gathered}
\end{equation}
For this problem, the fractional Euler--Lagrange equation and the
transversality condition (see Theorem~\ref{Teotransv_2}) are given,
respectively, by
\begin{equation}\label{Ex:el}
{_tD_T^{\alpha}}\left({^C_0 D_t^{\alpha}} x(t)\right)=0,
\end{equation}
\begin{equation}\label{Ex:tra}
2(1-x'(T))\left[{_t I_T^{1-\alpha}}{^C_0 D_t^{\alpha}}
x(t)\right]|_{t=T} +2T^2+({^C_0 D_t^{\alpha}}x(T))^2+T^2=0.
\end{equation}
Note that it is a difficult task to solve the above fractional
equations. For $0<\alpha <1$ a numerical or direct method should be
used \cite{Ric}. When $\alpha$ goes to $1$, problem \eqref{EX}
becomes
\begin{equation}
\label{EX:1}
\begin{gathered}
J(x,T)=\int_0^T \left[x'(t))^2+\varphi^2(T)\right]dt \longrightarrow \min\\
x(T)=\varphi(T)=T,\\
x(0)=1.
\end{gathered}
\end{equation}
and equations \eqref{Ex:el}--\eqref{Ex:tra} are replaced by
\begin{equation}\label{Ex:el:1}
x''(t)=0,
\end{equation}
\begin{equation}\label{Ex:tra:1}
2(1-x'(T))x'(T) +2T^2+(x'(T))^2+T^2=0.
\end{equation}
Solving equations \eqref{Ex:el:1} and \eqref{Ex:tra:1} we obtain
that
\begin{equation*}
\tilde{x}(t)=\frac{\sqrt{-6+6\sqrt{13}}-6}{\sqrt{-6+6\sqrt{13}}}t+1,
\quad T=\frac{1}{6}\sqrt{-6+6\sqrt{13}}
\end{equation*}
is a candidate for minimizer to problem \eqref{EX:1}.
\end{ex}

% ----------------------------------------

\section{Infinite horizon fractional variational problems}
\label{sec5}

Starting with the Ramsey pioneering work \cite{ram}, infinite horizon
variational problems have been widely used in economics (see,
\textrm{e.g.}, \cite{Brock,Chiang,Kami,Mal,Nitta} and the references
therein). One may assume that, due to some constraints of economical
nature, the infinite horizon variational problem does not depend on
the usual derivative but on the left Caputo fractional derivative.
In this condition one has to consider the following problem:
\begin{equation}
\label{inf}
\begin{gathered}
J(x)=\int_{a}^{+\infty} L[x](t)dt \longrightarrow \textrm{max}\\
x\in \mathrm{D}_\infty,
\end{gathered}
\end{equation}
where $L:[a,+\infty]\times \mathbb R^2\to\mathbb R$ is at least of
class $C^1$ and
$$\mathrm{D}_\infty=\left\{ x\in C^1([a,+\infty]) \, | \, x(a)=x_a \right\}.$$
The integral in problem \eqref{inf} may not converge.  In the case where the integral diverges, there may exist more than one path that yields an infinite value for the objective functional and it would be difficult to determine which among these paths is optimal. To handle this and similar situations
in a rigorous way, several alternative definitions of optimality for problems with unbounded time
domain have been proposed in the literature (see,
\textrm{e.g.}, \cite{Brock,gale,Kami}).  Here, we follow
Brock's notion of optimality \cite{Brock}, \textrm{i.e.}, a function
$x\in \mathrm{D}_\infty$ is said to be weakly maximal to problem
\eqref{inf} if
$$\lim_{T\rightarrow+\infty}\inf_{T' \geq T}\int_{a}^{T'}[L[\overline x](t)-L[x](t)] \, dt \leq 0,$$
for all $\overline x \in \mathrm{D}_\infty$.\\
Using similar approach as in \cite{Mal,Nitta}, we obtain the
transversality condition for infinite horizon fractional variational
problems. First, a lemma that we will use in the proof of
Theorem~\ref{Thconinf}.

\begin{lem}
\label{fun} If $g$ is continuous on $[a,+\infty)$ and
$$\lim_{T\rightarrow+\infty}\inf_{T'\geq T}\int_{a}^{T'} g(t) h(t)\,dt=0$$
for all continuous functions $h:[a,+\infty)\to\mathbb R$ with $h(a)=0$, then $g(t)=0$, for all $t \geq a$.
\end{lem}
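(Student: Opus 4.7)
The plan is to argue by contradiction, adapting the classical fundamental lemma of the calculus of variations to the unbounded setting. I would first observe that the map $T \mapsto \inf_{T' \geq T}\int_a^{T'} g(t)h(t)\,dt$ is nondecreasing in $T$, so the hypothesis is equivalent to
\[
\liminf_{T' \to +\infty}\int_a^{T'} g(t)h(t)\,dt = 0
\]
for every admissible test function $h$.

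Suppose, for contradiction, that $g(t_0)\neq 0$ for some $t_0\geq a$. Replacing $h$ by $-h$ if necessary (which is also admissible), I may assume $g(t_0)>0$. Continuity of $g$ then yields a nondegenerate closed interval $I\subset[a,+\infty)$ containing $t_0$ on which $g(t)\geq g(t_0)/2>0$. I would take $I$ to be compactly contained in $(a,+\infty)$ when $t_0>a$, and of the form $[a,a+\epsilon]$ when $t_0=a$.

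Next I would exhibit a continuous, nonnegative test function $h:[a,+\infty)\to\mathbb R$ supported in $I$, strictly positive on the interior of $I$, and satisfying $h(a)=0$. A piecewise-linear tent function suffices; in the boundary case $t_0=a$, the tent has its vertex somewhere in $(a,a+\epsilon)$ and vanishes at both endpoints, in particular at $a$. Then for every $T'\geq \sup I$,
\[
\int_a^{T'} g(t)h(t)\,dt \;=\; \int_I g(t)h(t)\,dt \;=:\; C \;>\; 0,
\]
a positive constant independent of $T'$. Consequently $\inf_{T'\geq T}\int_a^{T'} g(t)h(t)\,dt = C$ for every $T\geq \sup I$, and the iterated limit in the hypothesis equals $C>0$, contradicting the standing assumption. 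This forces $g\equiv 0$ on $[a,+\infty)$.

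The only subtlety I anticipate is the construction of $h$ in the endpoint case $t_0=a$: one must honour the constraint $h(a)=0$ while keeping $h$ strictly positive on a subinterval touching $a$. The tent above resolves this cleanly, so no density or compactness machinery is required beyond continuity of $g$ and the explicit bump.
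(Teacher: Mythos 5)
Your proposal is correct and follows essentially the same route the paper intends: the paper's proof of Lemma~\ref{fun} is just a one-line remark that the argument mimics the standard fundamental lemma of the calculus of variations, and your contradiction via a tent function on an interval where $g$ has constant sign, together with the observation that the iterated $\lim\inf$ in the hypothesis is precisely $\liminf_{T'\to+\infty}\int_a^{T'}g(t)h(t)\,dt$, supplies exactly those details (including the boundary case $t_0=a$, where the constraint $h(a)=0$ is honoured). No gap remains.
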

\begin{proof}
Can be done in a similar way as the proof of the standard
fundamental lemma of the calculus of variations (see, \textrm{e.g.},
\cite{Brunt}).
\end{proof}

\begin{thm} \label{Thconinf}
Suppose that $x$ is a weakly maximal to problem
\eqref{inf}. Let $h \in C^1([a,+\infty])$ and $\epsilon \in\mathbb
R$. Define
$$\begin{array}{lcl}
 A(\epsilon, T')& = & \displaystyle \int_{a}^{T'} \frac{L[x+\epsilon h](t)- L[x](t)}{\epsilon}dt;\\
& & \\
V(\epsilon, T)& = & \displaystyle \inf_{T' \geq T}\int_{a}^{T'} [L[x+\epsilon h](t)-L[x](t)]dt;\\
& & \\
V(\epsilon)&= & \displaystyle \lim_{T\rightarrow+\infty} V(\epsilon,
T).
\end{array}$$
Suppose that
\begin{enumerate}
\item $\displaystyle \lim_{\epsilon \rightarrow 0} \frac{V(\epsilon, T) }{\epsilon}$ exists for all $T$;
\item $\displaystyle \lim_{T\rightarrow+\infty}\frac{V(\epsilon, T) }{\epsilon}$ exists uniformly for $\epsilon$;
\item For every $T'> a$, $T > a$, and $\epsilon\in \mathbb{R}\setminus\{0\}$, there exists a sequence $\left(A(\epsilon, T'_n)\right)_{n \in \mathbb{N}}$
such that
$$\displaystyle \lim_{n \rightarrow +\infty} A(\epsilon, T'_n)= \displaystyle \inf_{T'\geq T} A(\epsilon, T')$$
uniformly for $\epsilon$.
\end{enumerate}
Then $x$ is a solution of the fractional Euler–-Lagrange equation
\begin{equation*}
\partial_2 L[x](t)+ {_tD_{T}^\alpha}( \partial_3 L[x](t))=0
\end{equation*}
for all $t\in[a,+\infty)$, and for all $T>t$. Moreover it satisfies
the transversality condition
\begin{equation*}
 \lim_{T\rightarrow+\infty} \inf_{T'\geq T}
{_tI^{1-\alpha}_{T'}}  \partial_3 L[x](t) ]_{t=T'}  x(T')=0.
\end{equation*}
\end{thm}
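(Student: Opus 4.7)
The plan is to linearize the weak-maximality inequality along admissible variations $x+\epsilon h$ with $h\in C^1([a,+\infty))$ and $h(a)=0$, using the three hypotheses to justify the necessary interchanges of $\lim_{T\to\infty}$, $\inf_{T'\geq T}$ and $\lim_{\epsilon\to 0}$. Since $x$ is weakly maximal, $V(\epsilon)\leq 0=V(0)$ for all $\epsilon$ near zero, so whenever the one-sided derivative exists it must vanish; hypotheses 1 and 2 guarantee that one may differentiate $V$ at $\epsilon=0$ and pull the $T$-limit out:
\begin{equation*}
0 \;=\; \frac{d}{d\epsilon}V(\epsilon)\Big|_{\epsilon=0} \;=\; \lim_{T\to+\infty}\lim_{\epsilon\to 0}\frac{V(\epsilon,T)}{\epsilon}.
\end{equation*}
Hypothesis 3 then lets the $\epsilon$-limit pass through the infimum, so that
\begin{equation*}
\lim_{\epsilon\to 0}\frac{V(\epsilon,T)}{\epsilon} \;=\; \inf_{T'\geq T}\,\lim_{\epsilon\to 0}A(\epsilon,T') \;=\; \inf_{T'\geq T}\int_a^{T'}\!\bigl[\partial_2 L[x](t)\,h(t)+\partial_3 L[x](t)\,{^C_a D_t^\alpha}h(t)\bigr]\,dt.
\end{equation*}

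Next I would apply the fractional integration by parts formula \eqref{IP1}; since $h(a)=0$, the lower boundary term drops, leaving
\begin{equation*}
\int_a^{T'}\bigl[\partial_2 L[x](t)+{_tD_{T'}^\alpha}\partial_3 L[x](t)\bigr]h(t)\,dt + \bigl[{_tI_{T'}^{1-\alpha}}\partial_3 L[x](t)\bigr]_{t=T'}h(T').
\end{equation*}
Putting everything together, for every admissible $h$
\begin{equation*}
\lim_{T\to+\infty}\inf_{T'\geq T}\Bigl\{\int_a^{T'}\bigl[\partial_2 L[x](t)+{_tD_{T'}^\alpha}\partial_3 L[x](t)\bigr]h(t)\,dt + \bigl[{_tI_{T'}^{1-\alpha}}\partial_3 L[x](t)\bigr]_{t=T'}h(T')\Bigr\}=0.
\end{equation*}
To extract the Euler--Lagrange equation I would specialize to $h$ with compact support in $[a,T]$: then for every $T'\geq T$ the boundary term vanishes and the integrand on $[T,T']$ contributes nothing beyond $[a,T]$, so Lemma~\ref{fun} applied on $[a,T]$ yields $\partial_2 L[x](t)+{_tD_T^\alpha}\partial_3 L[x](t)=0$ for all $t\in[a,T]$; letting $T$ be arbitrary covers the whole half-line.

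Once the Euler--Lagrange equation holds, the integral term vanishes for every admissible $h$ and we are left with
\begin{equation*}
\lim_{T\to+\infty}\inf_{T'\geq T}\bigl[{_tI_{T'}^{1-\alpha}}\partial_3 L[x](t)\bigr]_{t=T'}h(T')=0.
\end{equation*}
To turn this into the stated transversality condition, I would pick a specific admissible variation whose value at infinity reproduces $x$: take a cutoff $\eta\in C^1([a,+\infty))$ with $\eta(a)=0$ and $\eta\equiv 1$ on $[a+1,+\infty)$, and set $h=\eta\,x$; this $h$ lies in the admissible class and satisfies $h(T')=x(T')$ for all $T'\geq a+1$, yielding the transversality condition exactly as stated.

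The main obstacle I anticipate is the careful bookkeeping of the three limit interchanges: hypothesis 2 is what legitimizes pulling $\lim_{T\to\infty}$ outside the $\epsilon$-derivative, and hypothesis 3 is what legitimizes pulling $\lim_{\epsilon\to 0}$ inside $\inf_{T'\geq T}$, and each must be verified with an $\varepsilon$-$\delta$ argument exploiting the uniformity built into the hypotheses. A secondary (but routine) subtlety is verifying that the cutoff variation $h=\eta x$ indeed belongs to the admissible class and satisfies the regularity needed for the fractional integration by parts on $[a,T']$; the rest of the argument is essentially bookkeeping.
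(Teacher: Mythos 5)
Your proposal follows essentially the same route as the paper's proof: differentiate $V$ at $\epsilon=0$ using the weak-maximality inequality, invoke the three hypotheses to interchange $\lim_{T\to\infty}$, $\inf_{T'\geq T}$ and $\lim_{\epsilon\to 0}$, integrate by parts via \eqref{IP1}, kill the boundary term by a suitable choice of $h$ together with Lemma~\ref{fun} to obtain the Euler--Lagrange equation, and finally substitute a cutoff variation $h=\eta x$ (the paper uses $h=\chi x$ with $\chi$ eventually a nonzero constant, which is the same device) to obtain the transversality condition. The only differences are cosmetic (compactly supported $h$ versus $h(T')=0$), so this is in substance the paper's argument.
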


\begin{proof} Observe that $V(\epsilon) \leq 0$ for every $\epsilon \in \mathbb{R}$, and $V(0)=0$. Thus $V'(0)=0$, i.e.,
$$\begin{array}{lcl}
0 & = & \displaystyle \lim_{\epsilon \rightarrow 0} \frac{V(\epsilon)}{\epsilon}\\
& = & \displaystyle  \lim_{T\rightarrow+\infty}  \inf_{T' \geq T} \displaystyle \int_{a}^{T'} \lim_{\epsilon \rightarrow0} \frac{L[x+\epsilon h](t)- L[x](t)}{\epsilon}dt\\
& = & \displaystyle  \lim_{T\rightarrow+\infty}  \inf_{T' \geq T} \displaystyle \int_{a}^{T'} \left[\partial_2 L[x](t)h(t)+ \partial_3 L[x](t){_a^C D_t^\alpha}h(t)\right]  dt.\\
\end{array}$$
Thus, integrating by parts and since $h(a)=0$, we get
\begin{equation}\label{eq_1}
 \lim_{T\rightarrow+\infty}  \inf_{T' \geq T}\left\{ \displaystyle \int_{a}^{T'} [ \partial_2L[x](t)+ {_tD_{T'}^\alpha}( \partial_3 L[x](t))]h(t)dt +  [ {_tI^{1-\alpha}_{T'}}  \partial_3 L[x](t) h(t) ]_{t=T'} \right\}=0.
\end{equation}
By the arbitrariness of $h$, we may assume that $h(T')=0$ and so
\begin{equation}\label{eq_3}\lim_{T\rightarrow+\infty}  \inf_{T' \geq T} \displaystyle \int_{a}^{T'} [ \partial_2L[x](t)+ {_tD_{T'}^\alpha}( \partial_3 L[x](t))]h(t)dt =0.\end{equation}
Applying Lemma~\ref{fun} we obtain
$$\partial_2 L[x](t)+ {_tD_{T}^\alpha}( \partial_3 L[x](t))=0$$
for all $t\in[a,+\infty)$, and for all $T>t$. Substituting equation
\eqref{eq_3} into equation \eqref{eq_1} we get
\begin{equation}
\label{eq_4} \displaystyle \lim_{T\rightarrow+\infty} \inf_{T'\geq
T}[ {_tI^{1-\alpha}_{T'}}  \partial_3 L[x](t) ]_{t=T'}  h(T')= 0.
\end{equation}
To eliminate $h$ from equation \eqref{eq_4}, consider the particular
case $h(t)= \chi (t)x(t)$, where $\chi:[a,+\infty)\to\mathbb R$ is a
function of class $C^1$ such that $\chi(a)=0$ and
$\chi(t)=const\not=0$, for all $t>t_0$, for some $t_0>a$. We deduce
that
$$\displaystyle \lim_{T\rightarrow+\infty} \inf_{T'\geq T}[ {_tI^{1-\alpha}_{T'}}  \partial_3 L[x](t) ]_{t=T'}  x(T')= 0.$$
\end{proof}

% ------------------------

\section{Conclusion}
\label{sec:conc}

Transversality conditions are optimality conditions that are used along with Euler--Lagrange equations in order to find the optimal
paths (plans, programs, trajectories, \textrm{etc}) of dynamical models. The importance of such conditions is well known in economics models and other phenomena whose effects can be spread along time, \textrm{e.g.}, radioactive, pollution. In this paper we have given in the compact form transversality
conditions for fractional variational problems with the Caputo derivative. The fractional variational theory is only 15 years old so there are many unanswered questions. For instance, the question of existence of solutions to fractional variational problems
is a complete open area of research (more about this important issue the reader can find in \cite{cesari,comTatiana:Basia,torres2004,young}). Other interesting open question is about the convergence of the objective functional $J(x)=\int_{a}^{+\infty} L[x](t)dt$. This issue should be treat carefully, especially as the functional depends on fractional type of derivatives.
Generally, in order to solve fractional Euler--Lagrange differential equations
and apply transversality conditions one needs to use numerical methods. Some progress is being made already on the numerical methods for fractional variational problems \cite{agrawal:et:al:2012,Shakoor:01,MyID:221,MyID:225} but there is still a long way to go.
We believe that all pointed issues will be considered in a forthcoming papers.

% ----------------------------------------

\section*{Acknowledgements}

This work is supported by {\it FEDER} funds through
{\it COMPETE} --- Operational Programme Factors of Competitiveness
(``Programa Operacional Factores de Competitividade'')
and by Portuguese funds through the
{\it Center for Research and Development
in Mathematics and Applications} (University of Aveiro)
and the Portuguese Foundation for Science and Technology
(``FCT --- Funda\c{c}\~{a}o para a Ci\^{e}ncia e a Tecnologia''),
within project PEst-C/MAT/UI4106/2011
with COMPETE number FCOMP-01-0124-FEDER-022690.
 Agnieszka B. Malinowska is supported by Bia{\l}ystok University of Technology grant S/WI/02/2011.

% ----------------------------------------

% ----------------------------------------


\begin{thebibliography}{99}

\bibitem{AGRA1}
O.P. Agrawal, Fractional variational calculus and the transversality conditions, J. Phys. A 39 (2006) 10375--10384.

\bibitem{AGRA2}
O.P. Agrawal, Generalized Euler--Lagrange equations and transversality conditions for FVPs in terms of the Caputo derivative, J. Vib. Control 13 (2007) 1217--1237.

\bibitem{AGRA3}
O.P. Agrawal, Fractional variational calculus in terms of Riesz fractional derivatives, J. Phys. A 40 (2007) 6287--6303.

\bibitem{agrawal:et:al:2012} % ok
O. P. Agrawal, M. M. Hasan, X. W. Tangpong,
A numerical scheme for a class of parametric problem of fractional variational calculus,
J. Comput. Nonlinear Dyn. {\bf 7} (2012), no.~2, 021005, 6~pp.

\bibitem{Almeida:AML}
R. Almeida, Fractional variational problems with the Riesz–Caputo derivative,
Appl. Math. Lett. {\bf 25} (2012), 142--148.

\bibitem{MyID:145}
R. Almeida, R. A. C. Ferreira and D. F. M. Torres,
Isoperimetric problems of the calculus of variations
with fractional derivatives, Acta Math. Sci. Ser. B Engl. Ed. {\bf 32 }(2012), no. 2, 619--630.

\bibitem{R:A:D:10}
R. Almeida, A.B. Malinowska, D.F.M. Torres, A fractional calculus of variations for multiple integrals with application to vibrating string, J. Math. Phys. 51 (2010) 033503, 12~pp.

\bibitem{Shakoor:01} % ok
R. Almeida, S. Pooseh\ and\ D. F. M. Torres,
Fractional variational problems depending on indefinite integrals,
Nonlinear Anal. {\bf 75} (2012), no.~3, 1009--1025.

\bibitem{Almeida}
R. Almeida,  D.F.M. Torres, Calculus of variations with fractional derivatives and fractional integrals,  Appl. Math. Lett. 22 (2009) 1816--1820.

\bibitem{Ric}
R. Almeida,  D.F.M. Torres, Leitmann's direct method
for fractional optimization problems, Appl. Math. Comput. 217 (2010) 956--962.

\bibitem{BALEANU}
D. Baleanu, Fractional constrained systems and Caputo derivatives, J. Comput. Nonlinear Dyn., {\ 3}
(2008), 021102.

\bibitem{Baleanu:Agrawal}
D. Baleanu, O.P. Agrawal, Fractional Hamilton formalism within Caputo's derivative, Czechoslovak J. Phys. 56 (2006) 1087--1092.

\bibitem{Baleanu1}
D. Baleanu,  T. Maaraba, F. Jarad, Fractional variational principles with delay,  J. Phys. A 41 (2008) 315403, 8 pp.

\bibitem{zbig}
Z. Bartosiewicz, D.F.M. Torres, Noether's theorem on
time scales, J. Math. Anal. Appl. 342 (2008), 1220--1226.

\bibitem{NunoRui}
N.R.O. Bastos, R.A.C. Ferreira, D.F.M. Torres,
Discrete-time fractional variational problems,  Signal
Processing 91 (2011), 513--524.

\bibitem{NunoRui2}
N.R.O. Bastos, R.A.C. Ferreira,  D.F.M. Torres, Necessary optimality
conditions for fractional difference problems of the calculus of variations,
 Discrete Contin. Dyn. Syst. 29 (2011) 417--437.

\bibitem{bis}
R.K. Biswas, S. Sen, Fractional Optimal Control Problems with Specified Final Time, Journal of Computational and Nonlinear Dynamics 6 (2011) 021009-1.


\bibitem{Brock}
W.A. Brock, On existence of weakly maximal programmes in a multi-sector economy, Review of Economic Studies 37 (1970) 275--280.

\bibitem{Brunt}
B. van Brunt, The calculus of variations, Universitext. Springer-Verlag, New York, 2004.

\bibitem{Cap}
R. Caponetto, G. Dongola, L. Fortuna, I. Petras,  Fractional Order Systems: Modeling and Control Applications, World Scientific, Singapore, 2010.

\bibitem{cesari} % ok
L. Cesari,
{\it Optimization---theory and applications},
Applications of Mathematics (New York), 17,
Springer, New York, 1983.

\bibitem{Chachuat}
B.C. Chachuat, Nonlinear and dynamic optimization: From
Theory to Practice. \'{E}cole Polytechnique F\'{e}d\'{e}rale de
Lausanne, IC-32: Winter Semester 2006/2007.


\bibitem{Chiang}
A.C. Chiang,  Elements of Dynamic Optimization, McGraw-Hill, Inc., Singapore, 1992.

\bibitem{Cresson}
J. Cresson, Fractional embedding of differential operators and Lagrangian systems,  J. Math. Phys. 48 (2007) 033504, 34~pp.

\bibitem{Das}
S. Das, Functional fractional calculus for system identification and controls, Springer, Berlin Heidelberg, 2008.

\bibitem{kai}
K. Diethelm, The analysis of fractional differential equations, Springer, Berlin, 2010.


\bibitem{El-Nabulsi:Torres07}
R.A. El-Nabulsi, D.F.M. Torres, Necessary optimality conditions for fractional action-like integrals of
variational calculus with Riemann-Liouville derivatives of order $(\alpha,\beta)$,  Math. Methods Appl. Sci. 30 (2007) 1931--1939.

\bibitem{El-Nabulsi}
R.A. El-Nabulsi, D.F.M. Torres, Fractional actionlike variational problems,  J. Math. Phys. 49 (2008) 053521, 7 pp.


\bibitem{Frederico1}
G.S.F. Frederico,  D.F.M. Torres, Fractional conservation laws in optimal control theory, Nonlinear Dynam. 53 (2008) 215--222.

\bibitem{Frederico2}
G.S.F. Frederico, D.F.M. Torres,  Fractional optimal control in the sense of Caputo and the fractional Noether's theorem, Int. Math. Forum 3 (2008) 479--493.

\bibitem{gale}
D. Gale, On optimal development in a multisector economy, Rev. Econom. Stud. {\bf 34} (1967),
1--19.

\bibitem{hil}
R. Hilfer, Applications of Fractional Calculus in Physics, World Scientific Pub Co, Singapore, 2000.


\bibitem{Jumarie}
G. Jumarie, From Lagrangian mechanics fractal in space to space fractal Schrödinger's equation via fractional Taylor's series, Chaos Solitons Fractals 41 (2009) 1590--1604.

\bibitem{Jumarie3b}
G. Jumarie,
An approach via fractional analysis to non-linearity induced by coarse-graining in space,  Nonlinear Anal. Real World Appl. 11 (2010) 535--546.


\bibitem{Kami}
T. Kamihigashi, Necessity of transversality conditions for infinite horizon problems,  Econometrica  69 (2001) 995--1012.

\bibitem{kilbas}
A.A. Kilbas, H.M. Srivastava,  J.J. Trujillo,  Theory and applications of fractional differential equations, North-Holland Mathematics Studies, 204. Elsevier Science B.V., Amsterdam, 2006.

\bibitem{klimek}
M. Klimek, Fractional sequential mechanics-models with symmetric fractional derivatives,
Czech. J. Phys., 52 (2002), 1247--1253.

\bibitem{Mag}
R. Magin, Fractional Calculus in Bioengineering, Begell House Inc., Redding, CT, 2006.

\bibitem{Mai}
F. Mainardi, Fractional Calculus and Waves in Linear Viscoelasticity: An Introduction to Mathematical Models, Imperial College Press, London, 2010.

\bibitem{Mal}
A.B. Malinowska, N. Martins, D.F.M. Torres,
Transversality Conditions for Infinite Horizon Variational Problems
on Time Scales, Optim. Lett. 5 (2011) 41--53.

\bibitem{MalTor}
A.B. Malinowska, D.F.M. Torres, Generalized
natural boundary conditions for fractional variational problems in
terms of the Caputo derivative,  Comput. Math. Appl. 59 (2010)
3110--3116.

\bibitem{withTatiana:Basia}
T. Odzijewicz, A. B. Malinowska and D. F. M. Torres,
Fractional variational calculus with classical and combined Caputo derivatives,
Nonlinear Anal. 75 (2012), no. 3, 1507--1515,

\bibitem{comTatiana:Basia}
T. Odzijewicz, A. B. Malinowska\ and\ D. F. M. Torres,
Fractional calculus of variations in terms of a generalized fractional integral with applications to physics,
Abstract and Applied  Analysis (2012), Article ID 871912, 24 p.

\bibitem{old}
K.B. Oldham, J. Spanier, Fractional Calculus: Theory and Applications, Differentiation and Integration to Arbitrary Order, Academic Press, Inc., New York-London, 1974.


\bibitem{Nitta}
R. Okomura, D. Cai, T.G. Nitta, Transversality conditions for infinite horizon optimality: higher order differential problems,  Nonlinear Anal. 71 (2009) e1980--e1984.

\bibitem{ort}
M.D. Ortigueira, Fractional Calculus for Scientists and Engineers, Lecture Notes in Electrical Engineering, Vol. 84, Springer, 2011.


\bibitem{Podlubny}
I. Podlubny, Fractional differential equations, Mathematics in Science and Engineering, 198. Academic Press, Inc., San Diego, CA, 1999.

\bibitem{MyID:221} % ok
S. Pooseh, R. Almeida\ and\ D. F. M. Torres,
Expansion formulas in terms of integer-order derivatives
for the Hadamard fractional integral and derivative,
Numer. Funct. Anal. Optim. {\bf 33} (2012), no.~3, 301--319.

\bibitem{MyID:225} % ok
S. Pooseh, R. Almeida\ and\ D. F. M. Torres,
Approximation of fractional integrals by means of derivatives,
Comput. Math. Appl. (2012),
DOI: 10.1016/j.camwa.2012.01.068


\bibitem{ram}
F. Ramsey, A mathematical theory of savings, Economic
Journal 38 (1928) 543--559.

\bibitem{rie}
F. Riewe, Nonconservative Lagrangian and Hamiltonian mechanics, Phys. Rev. E (3) 53 (1996) 1890--1899.

\bibitem{sab}
J. Sabatier, O.P. Agrawal, J.A. Tenreiro Machado, Advances in Fractional Calculus: Theoretical Developments and Applications in Physics and Engineering, Springer, Dordrecht, 2007.


\bibitem{torres2004} % ok
D. F. M. Torres,
Carath\'eodory equivalence Noether theorems, and Tonelli full-regularity
in the calculus of variations and optimal control,
J. Math. Sci. (N. Y.) {\bf 120} (2004), no.~1, 1032--1050.

\bibitem{young} % ok
L. C. Young,
{\it Lectures on the calculus of variations and optimal control theory},
Foreword by Wendell H. Fleming Saunders, Philadelphia, 1969.

\end{thebibliography}
\end{document}